\numberwithin{equation}{section}
\newtheorem{thm}{Theorem}
\newtheorem{prop}[thm]{Proposition}
\newtheorem{cor}[thm]{Corollary}
\newtheorem{defin}[thm]{Definition}
\newtheorem{lemma}[thm]{Lemma}
\newtheorem{example}[thm]{Example}
\newcommand{\GC}{G^\mathbb C}
\begin{document}

\title{Good Representations and Homogeneous Spaces}
\author{M. Jablonski}
\date{}
\maketitle

\textit{Caveat.}  After the first posting of this note on the arXiv, I was informed that proofs of these results have previously appeared in the literature.  The main theorem has appeared in \cite{Luna:ClosedOrbitsofReductiveGroups}, \cite{Nisnevich:StabilityAndIntersections}, and \cite{Vinberg:StabilityOfReductiveGroupActions}.  The proofs in \cite{Luna:ClosedOrbitsofReductiveGroups} and \cite{Vinberg:StabilityOfReductiveGroupActions} are more general than my own.  The proof in \cite{Nisnevich:StabilityAndIntersections} over $\mathbb C$ is similar to my proof.
I am grateful to Dmitri Panyushev and Vladimir L. Popov for bringing these works to my attention  and Rich\'ard Rim\'anyi for helping me to translate \cite{Nisnevich:StabilityAndIntersections}.\\

\textit{Acknowledgements}.  This note is part of my thesis work completed under the direction of Pat Eberlein at the University of North Carolina, Chapel Hill.  I would like to thank Shrawan Kumar for many worthwhile conversations.\\

\section{Main Results}
Let $G$ be a complex reductive affine algebraic group.  Let $F,H$ be algebraic reductive subgroups.  The homogeneous space $G/F$ has a natural, transitive left action of $G$ on it.  We will consider the induced action of $H$ on $G/F$.

Hereafter a property of a space will be called generic if it occurs on a nonempty Zariski open set.  Our main result is the following.\\

  The following theorem and its corollaries are true for both real and complex algebraic groups.  Moreover, the results actually hold for real semi-algebraic groups by passing to finite index subgroups and finite covers of manifolds.  We omit the details of the proofs for semi-algebraic groups.

\begin{thm}\label{thm: H action on G/F} Consider the induced action of $H$ on $G/F$, then generic $H$-orbits are closed in $G/F$; that is, there is a nonempty Zariski open set of $G/F$ such that the $H$-orbit of any point in this open set is closed.\end{thm}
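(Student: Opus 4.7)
My plan is to reduce the theorem to a linear action of a reductive group and then apply stability theory in the spirit of Kempf--Ness. First I would translate the $H$-action on $G/F$ into the two-sided action of $H \times F$ on $G$ given by $(h,f) \cdot g = h g f^{-1}$. The quotient $\pi : G \to G/F$ is a morphism of affine varieties (by Matsushima's criterion, since $F$ is reductive), and the $H$-orbit through $\pi(g)$ is closed in $G/F$ if and only if the $(H \times F)$-orbit through $g$ is closed in $G$. Next, fix a faithful algebraic representation $\rho : G \to GL(W)$. Then $G$ embeds as a closed, $(H\times F)$-invariant subvariety of $V = \operatorname{End}(W)$, where $H \times F$ acts linearly via $(h,f) \cdot A = \rho(h) A \rho(f)^{-1}$. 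The theorem is thereby reduced to the following linear statement: if a reductive group $K$ acts linearly on a vector space $V$ preserving a closed subvariety $X$, then the set of $x \in X$ with closed $K$-orbit contains a nonempty Zariski open subset.

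For this linear statement I would use Kempf--Ness. Fix a maximal compact $K_0 \subset K$ and a $K_0$-invariant Hermitian form on $V$, and set $m(v) = \|v\|^2$. Kempf--Ness says that $K \cdot v$ is closed if and only if $\overline{K \cdot v}$ contains a critical point of $m$. The \emph{stable} locus $X^s \subseteq X$ of points whose $K$-orbit is closed of the maximal possible orbit dimension is Zariski open whenever it is nonempty; so the content of the theorem is producing a stable point. To do so, start with any $v \in X$ of maximal $K$-orbit dimension and follow the negative-gradient flow of $m$. The gradient of $m$ lies along the infinitesimal action of $i\mathfrak{k}_0 \subset \mathfrak{k}$, hence the flow runs through $K$-orbits and stays inside the $K$-invariant $X$, converging to a critical point $v_0 \in X$ with $K \cdot v_0$ closed. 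A Luna-slice argument at $v_0$ then exhibits a Zariski open $K$-invariant neighborhood in $X$ on which every orbit is closed with stabilizer conjugate to $K_{v_0}$.

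The main obstacle is to ensure the closed orbit $K \cdot v_0$ has \emph{maximal} orbit dimension; otherwise the slice neighborhood supplied by Luna would lie inside a proper closed subvariety of $X$ and miss the generic point of $X$ entirely. This is handled by a semicontinuity argument along the gradient trajectory: stabilizer dimensions are upper semicontinuous, and because we started at a point of maximal orbit dimension (equivalently, minimal stabilizer dimension), the limit point $v_0$ has stabilizer of the same dimension. Verifying this semicontinuity carefully, and the fact that the gradient flow actually converges in $X$ rather than escaping to infinity, is the heart of the argument; everything else is formal manipulation.
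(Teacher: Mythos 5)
Your reduction to the two-sided $H\times F$ action on $G$ is legitimate, and the equivalence ``$H\cdot\pi(g)$ closed in $G/F$ $\iff$ $(H\times F)\cdot g$ closed in $G$'' is sound (since $F$ acts with all orbits closed, the quotient $G\to G//F$ carries closed $F$-saturated sets to closed sets and vice versa). But the linear statement you reduce to is \emph{false} as you have stated it: take $K=\mathbb{C}^*$ acting on $V=X=\mathbb{C}$ by scaling. Every nonzero orbit is non-closed, the only closed orbit is $\{0\}$, and the locus of closed orbits is a single point, not a dense open set. So ``reductive $K$ acting linearly, preserving a closed $X$'' cannot by itself imply generically closed orbits; something special about $X=\rho(G)$ with $K=H\times F$ must be used, and your argument does not invoke it.

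The specific gap is in your semicontinuity step. Upper semicontinuity of stabilizer dimension says that if $v_n\to v_0$ then $\dim K_{v_0}\geq\limsup\dim K_{v_n}$, i.e.\ the stabilizer can only \emph{grow} in the limit, so the orbit dimension can only \emph{drop}. Starting the gradient flow of $\|\cdot\|^2$ from a $v$ of maximal orbit dimension gives a limit $v_0\in\overline{K\cdot v}$ whose orbit is indeed closed, but $K\cdot v_0$ may well have strictly smaller dimension than $K\cdot v$ (in the $\mathbb{C}^*$ example, starting at $(1,0)$ for the action $t\cdot(x,y)=(tx,t^{-1}y)$, the flow lands at the origin). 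Semicontinuity does not rescue you; it points in the wrong direction. The paper gets a closed orbit of maximal dimension by an entirely different mechanism: after conjugating $H,F$ to be $\theta$-stable, the compact real form $U$ of $G$ is Zariski dense, the compact group $U_H\times U_F$ has all orbits in $U$ closed (they are compact), and the real-vs-complex closedness lemma promotes these to Zariski-closed $(H\times F)$-orbits. Since the set of maximal-dimension orbits is open and meets the dense set $U$, a closed orbit of maximal dimension is found, and only then is the Newstead/Luna openness argument (which you correctly invoke at the end) applied. That Zariski-density-of-the-compact-form input, i.e.\ Weyl's unitarian trick, is the idea missing from your proposal.
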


\begin{cor}\label{cor: H normal} Let $G,H,F$ be as above.  If $H$ is normal in $G$, then all orbits of $H$ are closed in $G/F$.  Consequently, if $G$ acts on $V$ and the orbit $Gv$ is closed, then $Hv$ is also closed. \end{cor}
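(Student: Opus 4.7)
The plan is to leverage Theorem~\ref{thm: H action on G/F} (which guarantees the existence of at least one closed $H$-orbit) together with normality of $H$ to propagate closedness to every orbit.

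\textbf{Step 1: Translation by $G$ carries $H$-orbits to $H$-orbits.} For any $g \in G$, left multiplication $L_g : G/F \to G/F$, $xF \mapsto gxF$, is an isomorphism of varieties, in particular a homeomorphism. Using normality, for $h \in H$ we have $h g x F = g (g^{-1} h g) x F$, where $g^{-1} h g \in H$; as $h$ ranges over $H$ so does $g^{-1}hg$. Hence
\[
L_g(H \cdot xF) \ = \ H \cdot (gxF),
\]
so $L_g$ permutes the $H$-orbits. This is the only place normality is used.

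\textbf{Step 2: Spreading closedness.} By Theorem~\ref{thm: H action on G/F} there is at least one point $x_0F \in G/F$ whose $H$-orbit is closed. Given an arbitrary $yF \in G/F$, transitivity of the $G$-action on $G/F$ provides $g \in G$ with $gx_0F = yF$. By Step~1, $H \cdot yF = L_g(H \cdot x_0F)$, which is the image of a closed set under a homeomorphism and therefore closed. Since $yF$ was arbitrary, every $H$-orbit on $G/F$ is closed.

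\textbf{Step 3: From $G/F$ back to $V$.} Suppose $G \actingon V$ and $Gv$ is closed. Let $F = G_v$ be the stabilizer of $v$. By the Matsushima--Onishchik theorem, the stabilizer of a point on a closed orbit of a reductive group is itself reductive, so $F$ is a reductive subgroup of $G$ and the hypotheses of the first part apply. The orbit map yields a $G$-equivariant isomorphism $G/F \cong Gv$ (identifying $eF$ with $v$), which intertwines the two $H$-actions. By Step~2 the $H$-orbit of $eF$ is closed in $G/F$, hence $Hv$ is closed in $Gv$; since $Gv$ is closed in $V$, $Hv$ is closed in $V$.

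The main theorem does all the genuine work; the only obstacle in the corollary is the conceptual observation in Step~1 that normality lets $G$ act on the set of $H$-orbits, after which transitivity of $G \actingon G/F$ finishes the argument. For the second statement the one external input needed is reductivity of $G_v$, supplied by Matsushima--Onishchik.
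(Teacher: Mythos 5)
Your proof of the first statement is essentially identical to the paper's: both exhibit one closed $H$-orbit via Theorem~\ref{thm: H action on G/F}, then use normality to observe that $gHk F = Hgk F$ and spread closedness around by the transitive $G$-action, which acts by variety isomorphisms of $G/F$. For the second statement, however, you take a cleaner route than the paper. The paper invokes Corollary~\ref{cor: V G good implies V H good} to produce a single $g\in G$ with $Hgv$ closed, then uses normality a second time to rewrite $Hgv = gHv$ and conclude. You instead apply the first statement of the corollary directly: since $Gv$ is closed, $G_v$ is reductive, so $G/G_v$ falls under the hypotheses of part one; every $H$-orbit in $G/G_v \cong Gv$ is closed, in particular $Hv$, and $Hv$ is then closed in $V$ because $Gv$ is. This is the same underlying machinery but avoids the detour through Corollary~\ref{cor: V G good implies V H good} (and a second invocation of normality), making the logical dependence between the two halves of the corollary more transparent. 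One small note: the reductivity of $G_v$ is cited in the paper from Borel--Harish-Chandra and Richardson--Slodowy rather than under the name Matsushima--Onishchik, but the content is the same and the step is sound.
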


\begin{cor}\label{cor: intersection of red.} Let $G$ be a reductive algebraic group.  If $H,F$ are generic reductive subgroups, then $H\cap F$ is also reductive.  More precisely, take any two reductive subgroups $H$, $F$ of $G$.  Then $H\cap gFg^{-1}$ is reductive for generic $g\in G$.\end{cor}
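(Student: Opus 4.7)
The plan is to identify $H \cap gFg^{-1}$ as the $H$-isotropy subgroup at the point $gF \in G/F$, apply Theorem \ref{thm: H action on G/F} to obtain a closed orbit for generic $g$, and then invoke the standard fact that the stabilizer of a closed orbit of a reductive group acting on an affine variety is itself reductive.

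First I would compute the isotropy subgroup of the $H$-action on $G/F$ at the coset $gF$, namely
\[
H_{gF} = \{h \in H : hgF = gF\} = \{h \in H : g^{-1}hg \in F\} = H \cap gFg^{-1}.
\]
By Theorem \ref{thm: H action on G/F}, there is a nonempty Zariski open set $U \subset G/F$ on which every $H$-orbit is closed; its preimage in $G$ under the quotient map $G \to G/F$ is a nonempty Zariski open subset of $G$, so for generic $g \in G$ the orbit $H \cdot gF$ is closed in $G/F$.

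Next I would apply Matsushima's criterion. Since $F$ is reductive, $G/F$ is an affine variety. The closed orbit $H \cdot gF$ is then a closed subvariety of an affine variety, hence itself affine, and as a homogeneous space it is isomorphic to $H/(H \cap gFg^{-1})$. A second application of Matsushima's criterion, this time to $H$, yields that the stabilizer $H \cap gFg^{-1}$ is reductive. The ``generic $H, F$'' phrasing is then simply a reformulation: for any fixed reductive $H$ and $F$, the conjugate $gFg^{-1}$ is again reductive for every $g$, and the intersection is reductive for $g$ in a nonempty Zariski open subset of $G$.

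The main obstacle I anticipate is not in the logical flow but in having the right version of Matsushima's criterion available in the category one works in (complex algebraic, real algebraic, or semi-algebraic, as noted in the paper's setup). If one prefers to avoid Matsushima as a black box, an alternative route is to choose a $G$-equivariant closed embedding of $G/F$ into a rational representation $V$ of $G$ (which exists because $F$ is reductive), observe that a closed $H$-orbit in $G/F$ remains closed in $V$, and then invoke the classical result that stabilizers of closed orbits of reductive groups on vector spaces are reductive. Either route delivers the corollary in a short argument built on top of Theorem \ref{thm: H action on G/F}.
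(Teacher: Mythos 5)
Your proposal is correct and rests on the same two pillars as the paper's proof: Theorem~\ref{thm: H action on G/F} to produce a closed $H$-orbit through $gF$ for generic $g$, and the fact that the isotropy group of a closed orbit of a reductive group on an affine variety is reductive. The only organizational difference is that the paper detours through a representation $V$ in which $F$ appears as the stabilizer $G_v$ of a point with closed orbit (citing \cite[Proposition 2.4]{BHC}) and then invokes Corollary~\ref{cor: V G good implies V H good}, whereas you work directly on the affine variety $G/F$ and apply Theorem~\ref{thm: H action on G/F} without that intermediate step; your second ``alternative route'' paragraph is in fact essentially the paper's argument, so the two are interchangeable and your direct version is, if anything, a hair shorter.
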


\textit{Remark}.  The word generic cannot be replaced by all.  We show this in Example \ref{ex: non-red intersection}.  Theorem \ref{thm: H action on G/F} is proven first for complex groups then deduced for real groups.  It is not known at this time, to the author, whether or not these results hold true for more general algebraic groups.  Our proof exploits Weyl's Unitarian Trick.

Before proving this theorem, we present some corollaries to demonstrate its value.  Proofs of these results have been placed at the end.

\begin{cor}\label{cor: V G good implies V H good} Let $G$ be a reductive group acting linearly on $V$.  Let $H$ be a reductive subgroup of $G$.  If $G$ has generically closed orbits then $H$ does also.  Moreover, each closed $G$-orbit is stratified by $H$-orbits which are generically closed. \end{cor}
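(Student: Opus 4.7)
The plan is to reduce the corollary to Theorem \ref{thm: H action on G/F} applied along individual closed $G$-orbits, and then assemble the pointwise information into a global open subset of $V$.

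I would begin with the \emph{moreover} clause. Fix any $v \in V$ for which $Gv$ is closed in $V$. Since $V$ is affine, $Gv$ is a closed and hence affine subvariety, so Matsushima's theorem forces the stabilizer $F := G_v$ to be reductive. The orbit map provides a $G$-equivariant isomorphism $G/F \cong Gv$, so Theorem \ref{thm: H action on G/F} applied to the $H$-action on $G/F$ supplies a nonempty Zariski open subset of $Gv$ on which the $H$-orbits are closed in $Gv$. Because $Gv$ is closed in $V$, these $H$-orbits are also closed in $V$, which is precisely the stratification statement.

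For the first assertion, let $U$ denote the nonempty Zariski open set of points $v \in V$ with $Gv$ closed, and set
\[
S := \{v \in V : Hv \text{ is closed in } V\}.
\]
I would first show that $S$ is Zariski dense in $V$ by a pointwise argument. If not, then there is a nonempty open $U_1 \subset U$ disjoint from $S$. Picking $v \in U_1$, the intersection $U_1 \cap Gv$ is a nonempty open subset of the irreducible variety $Gv$, while the previous paragraph shows $Gv \cap S$ is open and dense in $Gv$. Thus $U_1 \cap Gv \subset Gv \setminus S$ would be a nonempty open subset of the irreducible variety $Gv$ contained in a proper closed subset, which is impossible.

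The main obstacle is upgrading this density statement to the required openness, since the polystable set $S$ need not be open a priori. I would handle this by invoking the constructibility of the polystable locus for an action of a reductive group, which is a standard consequence of Luna's slice theorem and the associated stratification. A dense constructible subset of the irreducible variety $V$ must contain a nonempty Zariski open subset, which produces the desired open set of $V$ on which all $H$-orbits are closed.
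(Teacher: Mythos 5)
Your treatment of the \emph{moreover} clause is essentially identical to the paper's: pick a closed orbit $Gv$, note $G_v$ is reductive (the paper cites Borel--Harish-Chandra / Richardson--Slodowy rather than naming Matsushima), identify $Gv \cong G/G_v$, and apply Theorem~\ref{thm: H action on G/F}.

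For the first statement you diverge. The paper takes a point $w$ in the intersection of $\mathcal O := \{v : \dim Hv \text{ maximal}\}$ with the open locus $\mathcal U$ of closed $G$-orbits, notes that for generic $g$ the point $gw$ stays in $\mathcal O \cap \mathcal U$ and $H\cdot gw$ is closed (by the second statement), and then invokes Proposition~\ref{prop: Newstead} (the existence of one closed $H$-orbit of maximal dimension forces a Zariski open set of them). You instead prove that $S := \{v : Hv \text{ closed}\}$ is Zariski dense by a contradiction argument across closed $G$-orbits, and then appeal to constructibility of the $H$-polystable locus via Luna's slice theorem to upgrade density to openness. Your density argument is sound (modulo a mild irreducibility point about orbits of a possibly disconnected $G$, which the paper also glosses over), and the constructibility route does work over $\mathbb C$. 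But it is heavier machinery than is needed here, and more importantly the paper explicitly wants the corollary to hold in the real case as well, where a Luna-type stratification is less off-the-shelf. Note that once you have density of $S$, you already know $S$ meets the open set $\mathcal O$ of maximal-dimension $H$-orbits, and at that point Proposition~\ref{prop: Newstead} -- which is stated for both real and complex groups -- finishes the job with no appeal to the slice theorem; that is exactly the shortcut the paper takes.
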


We say that a representation $V$ of $G$ is \textit{good} if generic $G$-orbits in $V$ are closed.

\begin{cor}\label{cor: V,W G good implies V+W is G good} Let $G$ be a reductive group, and let $V$ and $W$ be good $G$-representations, that is, generic $G$-orbits are closed.  Then $V\oplus W$ is also a good $G$-representation.\end{cor}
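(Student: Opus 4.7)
My plan is to reduce the statement to Corollary \ref{cor: V G good implies V H good} by enlarging the acting group. Consider the product group $G \times G$ acting on $V \oplus W$ by the external direct sum, namely $(g_1,g_2) \cdot (v,w) = (g_1 v, g_2 w)$. Since $G$ is reductive, so is $G \times G$, and the diagonal embedding $\Delta \colon G \hookrightarrow G \times G$ identifies the original diagonal $G$-action on $V \oplus W$ with the restriction to a reductive subgroup.

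The key observation is that for the $G \times G$-action, orbits split as products: $(G \times G) \cdot (v,w) = (Gv) \times (Gw)$. Hence the orbit of $(v,w)$ is closed in $V \oplus W$ exactly when both $Gv \subset V$ and $Gw \subset W$ are closed. By hypothesis there exist nonempty Zariski open sets $U_V \subset V$ and $U_W \subset W$ consisting of points with closed $G$-orbits; the product $U_V \times U_W$ is then a nonempty Zariski open set in $V \oplus W$ on which every $G \times G$-orbit is closed. Thus $V \oplus W$ is a good $G \times G$-representation.

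Now I apply Corollary \ref{cor: V G good implies V H good} with ambient group $G \times G$, representation $V \oplus W$, and reductive subgroup $\Delta(G)$. The conclusion is that generic $\Delta(G)$-orbits, which are precisely the generic diagonal $G$-orbits, are closed in $V \oplus W$. This is exactly the statement that $V \oplus W$ is a good $G$-representation.

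The only potentially subtle point is verifying that the product $U_V \times U_W$ qualifies as the required Zariski open set in the product variety $V \oplus W$, but this is immediate from the definition of the Zariski topology on a product of affine varieties. All the real content has been absorbed into Corollary \ref{cor: V G good implies V H good}, so no further work is needed.
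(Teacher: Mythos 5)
Your proposal is correct and follows essentially the same route as the paper: both view $V\oplus W$ as a $G\times G$-representation, observe that generic $G\times G$-orbits are closed (being products of closed $G$-orbits), and then apply Corollary \ref{cor: V G good implies V H good} to the diagonal reductive subgroup $\Delta(G)\subset G\times G$. You spell out the open set $U_V\times U_W$ a bit more explicitly than the paper does, but the argument is the same.
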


This corollary is of particular interest as it allows us to build good representations from smaller ones.  The idea of building good representations from subrepresentations was also carried out in \cite[Section 3]{EberleinJablo}.  In that setting, the representations of interest are those that have points whose real Mumford numerical function is negative.  The results of the current paper generalize some of those results.

\textit{Remark.} This corollary could have been stated more generally where $V,W$ are $G$-varieties.  Then the $G$-variety $V\times W$, with the diagonal action, has generically closed orbits.  The proof is the same.

\begin{example}[non-reductive intersection and non-closed orbit]\label{ex: non-red intersection} There exist semi-simple $G$ and reductive subgroups $H,F$ such that $H\cap F$ is not reductive.  Additionally, we demonstrate a representation $V$ of $G$ so that $G \cdot x$ is closed but $H\cdot x$ is not closed, for some $x\in V$.
\end{example}

Recall the following well-known fact.  Let $G$ be an reductive affine algebraic group acting on an affine variety.  If the orbit $G\cdot x$ is closed then $G_x$ is reductive, see \cite[Theorem 3.5]{BHC} or \cite[Theorem 4.3]{RichSlow}.  We will choose $F=G_x$ for a particular $x\in V$.  Then $H\cap F = H\cap G_x = H_x$.  Once it is shown that $H_x$ is not reductive, the orbit $H\cdot x$ cannot be closed by the fact stated above.

Consider $G=SL_6(\mathbb C)$ acting on $V=\bigwedge ^2 \mathbb C^6 \simeq \mathfrak{so}(6,\mathbb C)$.  This is the usual action and is described as follows.  For $M\in \mathfrak{so}(6,\mathbb C)$ and $g\in SL_6 (\mathbb C)$,  the action is defined as $g\cdot M = gMg^t$.  The subgroup $H = SL_2(\mathbb C)$ is imbedded as the upper left $2\times 2$ block.

Let $v\in V$ be the block diagonal matrix consisting of the blocks $J=\begin{bmatrix} 0 & 1\\ -1& 0 \end{bmatrix}$ along the diagonal.  That is, $v=\begin{bmatrix} J\\ & J\\ && J\end{bmatrix}$.  Given the standard inner product (from the trace form) on $V$, the vector $v$  is a so-called minimal vector as $v^2= - Id$, thus $G\cdot v$ is closed.  See \cite[Example 1]{EberleinJablo} for details and more information on minimal vectors; see also \cite{RichSlow}.  Consider $x=g\cdot v$ where
$$g=\begin{bmatrix} 1&0&1\\&1&0\\&&1\\ \\&&&& Id_{3\times 3}\\ &&&&&\end{bmatrix}$$
Since $G_v$ is reductive, $G_x = gG_v G^{-1}$ is also reductive.  One can  compute $H_x$ to show that $H_x \simeq \mathbb C_a = \begin{bmatrix}1&a\\0&1\end{bmatrix}$.  This group is clearly not reductive and we have the desired example.\\

\section{Technical Lemmas}

We recall the definition of varieties and morphisms which are defined over $\mathbb R$.  This is the setting that we will primarily work in.  See \cite[$\S \S 11-14$]{Borel:LinAlgGrps} or \cite[Chapter 1, 0.10]{Margulis} for more information on varieties and $k$-structures on varieties.

\begin{defin}[Real points of affine subvarieties] An affine subvariety $M$ of $\mathbb C^n$ is the zero set of a collection of polynomials on $\mathbb C^n$.  The variety $M$ is said to be defined over $\mathbb R$  if $M$ is the zero set of polynomials whose coefficients are real.  Thus $\mathbb C[M]=\mathbb R[M]\otimes_\mathbb R \mathbb C$.  The real points of $M$ are defined as the set $M(\mathbb R)=M\cap \mathbb R^n$; we call such a set a real variety.\end{defin}

\begin{defin}[$\mathbb R$-structures] Given an abstract affine variety $X$, one defines a $\mathbb R$-structure on $X$ by means of an isomorphism $\alpha :X\to M$.  A morphism $f:X\to Y$ of $\mathbb R$-varieties is said to be defined over $\mathbb R$ if the comorphism $f^*:\mathbb C [Y] \to \mathbb C [X]$ satisfies  $f^*(\mathbb R [Y]) \subset \mathbb R [X]$.  Additionally, we define the real points of $X$ to be $X(\mathbb R)=\alpha ^{-1}(M(\mathbb R))$. \end{defin}

\textit{Remark}.  Let $M \subset \mathbb C^m$ and $N\subset \mathbb C^n$ be subvarieties defined over $\mathbb R$.  Then $f:M\to N$ being defined over $\mathbb R$ implies $f(M(\mathbb R))\subset N(\mathbb R)$.  To obtain the converse one needs $M$ to have an additional property that we call the (RC) property, see Definition \ref{def: RC property}.  We state the converse after defining this property.

We observe that a variety can be endowed with many different real structures.

\begin{defin}\label{def: RC property}[RC - property] Let $X$ be a complex variety defined over $\mathbb R$.  We say that $X$ has the (RC) property (real-complexified) if the real points $X(\mathbb R)$ are Zariski dense in $X$.\end{defin}

This scenario arises precisely if one begins with a real variety $Z\subset \mathbb R^n$ and considers the Zariski closure $\bar Z\subset \mathbb C^n$.  Here $\bar Z$ has the (RC) property; see \cite{Whitney} for an introduction to real varieties and their complexifications.

\begin{prop}\label{prop: f:M to N defined over R} Let $M \subset \mathbb C^m$ and $N\subset \mathbb C^n$ be subvarieties defined over $\mathbb R$.  Assume that $M$ has the (RC) property.  Then $f:M\to N$ being defined over $\mathbb R$ is equivalent to $f(M(\mathbb R))\subset N(\mathbb R)$.\end{prop}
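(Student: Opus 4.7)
The forward implication is the one already recorded in the Remark: pulling back the coordinate functions $y_1,\dots,y_n$ of $N$ (which lie in $\mathbb{R}[N]$) through $f^*$ yields components $f_i \in \mathbb{R}[M]$, so at any $x \in M(\mathbb{R})$ one has $f_i(x) \in \mathbb{R}$ and hence $f(x) \in N \cap \mathbb{R}^n = N(\mathbb{R})$. This does not need (RC). So my plan is to focus on the converse: assuming $f(M(\mathbb{R})) \subset N(\mathbb{R})$, I want to show $f^*(\mathbb{R}[N]) \subset \mathbb{R}[M]$.

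The organizing observation I would isolate first is the following detection lemma, valid precisely when $M$ has the (RC) property: a function $p \in \mathbb{C}[M]$ lies in $\mathbb{R}[M]$ if and only if $p$ is real-valued on $M(\mathbb{R})$. The proof is routine. From the definition $\mathbb{C}[M] = \mathbb{R}[M] \otimes_{\mathbb{R}} \mathbb{C}$ one extracts the direct-sum decomposition $\mathbb{C}[M] = \mathbb{R}[M] \oplus i\,\mathbb{R}[M]$, so any $p$ writes uniquely as $p = p_1 + i p_2$ with $p_j \in \mathbb{R}[M]$. Then $p$ is real on $M(\mathbb{R})$ exactly when $p_2$ vanishes on $M(\mathbb{R})$, and Zariski density of $M(\mathbb{R})$ in $M$ upgrades this to $p_2 = 0$ as an element of $\mathbb{R}[M]$. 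This is the only place the (RC) hypothesis gets used.

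Granting the detection lemma, the proposition is essentially immediate. Pick any $h \in \mathbb{R}[N]$ and set $p = f^*(h) = h \circ f \in \mathbb{C}[M]$. For $x \in M(\mathbb{R})$ the hypothesis gives $f(x) \in N(\mathbb{R})$, and since $h$ has real coefficients it sends $N(\mathbb{R})$ into $\mathbb{R}$; thus $p(x) \in \mathbb{R}$. So $p$ is real-valued on $M(\mathbb{R})$, and the detection lemma places $p$ in $\mathbb{R}[M]$. As $h \in \mathbb{R}[N]$ was arbitrary, $f^*(\mathbb{R}[N]) \subset \mathbb{R}[M]$, i.e., $f$ is defined over $\mathbb{R}$.

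I do not anticipate a real obstacle. The only step worth pausing over is the direct-sum decomposition $\mathbb{C}[M] = \mathbb{R}[M] \oplus i\,\mathbb{R}[M]$, which is built in to the definition of a variety defined over $\mathbb{R}$ adopted here. Everything else is a clean bookkeeping consequence of Zariski density of the real points.
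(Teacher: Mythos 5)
Your argument is correct and matches the paper's strategy: both directions hinge on the decomposition of $\mathbb{C}[M]$ into real and purely imaginary parts together with Zariski density of $M(\mathbb{R})$. Where the paper works coordinatewise, replacing $f$ by the real polynomial representative $P=\tfrac{1}{2}(f+\overline f)$ before pulling back $g\in\mathbb{R}[N]$, you package the same density argument as a ``detection lemma'' and apply it once directly to $f^*(h)$ --- a clean reorganization of the same proof.
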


This result is useful but not needed in our proofs; we postpone the proof of this proposition till the end.  One immediately sees that the same holds more generally for abstract affine varieties with $\mathbb R$-structures.  That is, let $X,Y$ be complex affine varieties defined over $\mathbb R$ and $f:X\to Y$ a morphism.  Assume that $X$ has the (RC) property.  Then $f$ is defined over $\mathbb R$ if and only if $f(X(\mathbb R))\subset Y(\mathbb R)$.  Often we will simply say that $f:X\to Y$ is defined over $\mathbb R$, or $f$ is an $\mathbb R$-morphism, when both varieties and the morphism are defined over $\mathbb R$.

\begin{lemma}\label{lemma: BHC abstractly} Let $X$ be a complex affine variety and $G$ a complex reductive affine algebraic  group acting on $X$, with all defined over $\mathbb R$.  Then $G(\mathbb R)$ acts on $X(\mathbb R)$ and for $x\in X(\mathbb R)$ the orbit $G(\mathbb R)\cdot x$ is Hausdorff closed in $X(\mathbb R)$ if and only if $G\cdot x$ is Zariski closed in $X$.\end{lemma}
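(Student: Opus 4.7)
My plan is to reduce the statement to the well-known linear case. For a complex reductive group $G$ acting linearly on a $G$-representation $V$, with everything defined over $\mathbb{R}$, the theorem of Birkes--Borel--Harish--Chandra (see \cite{BHC}, \cite{RichSlow}) asserts that $G(\mathbb{R}) \cdot v$ is closed in the Hausdorff topology on $V(\mathbb{R})$ if and only if $G \cdot v$ is Zariski closed in $V$. The job is therefore to promote $X$ to a closed $G$-subvariety of such a $V$, compatibly with the $\mathbb{R}$-structures.

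That $G(\mathbb{R})$ acts on $X(\mathbb{R})$ is automatic: the action $G \times X \to X$ is an $\mathbb{R}$-morphism, so sends real points to real points. For the embedding, I would exploit the fact that $\mathbb{C}[X]$ is locally finite as a rational $G$-module. First I pick a finite set $f_1,\ldots,f_n$ of $\mathbb{R}$-algebra generators of $\mathbb{R}[X]$. Because the coaction $\mathbb{C}[X] \to \mathbb{C}[G] \otimes_{\mathbb{C}} \mathbb{C}[X]$ is defined over $\mathbb{R}$, the smallest $G$-stable subspace $W \subset \mathbb{C}[X]$ containing each $f_i$ is finite dimensional and defined over $\mathbb{R}$, i.e.\ $W = W_{\mathbb{R}} \otimes_{\mathbb{R}} \mathbb{C}$ with $W_{\mathbb{R}} = W \cap \mathbb{R}[X]$. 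Taking $V = W^{*}$, the evaluation map $\iota : X \to V$, $\iota(x)(f) = f(x)$, is a $G$-equivariant $\mathbb{R}$-morphism and a closed immersion because the $f_i$ generate $\mathbb{R}[X]$ (hence $\mathbb{C}[X]$), and it sends $X(\mathbb{R})$ into $V(\mathbb{R}) = W_{\mathbb{R}}^{*}$.

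Closedness then transfers cleanly in both topologies. Since $\iota(X)$ is Zariski closed in $V$, a $G$-invariant subset of $X$ is Zariski closed in $X$ iff its image is Zariski closed in $V$. Likewise $\iota(X(\mathbb{R}))$ is Hausdorff closed in $V(\mathbb{R})$ because $X(\mathbb{R})$ is the real zero set of a family of real polynomials, so a $G(\mathbb{R})$-invariant subset of $X(\mathbb{R})$ is Hausdorff closed iff its image is closed in $V(\mathbb{R})$. Applying the linear theorem to the orbit $\iota(G \cdot x) = G \cdot \iota(x)$ then yields the desired equivalence.

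The main obstacle is really the construction of the $\mathbb{R}$-structured equivariant embedding; once that is in place, the rest is bookkeeping together with the invocation of the linear result. The technical input needed is the assertion that the $G$-orbit of any element of $\mathbb{R}[X]$ spans a finite dimensional $\mathbb{R}$-rational subspace of $\mathbb{C}[X]$, which follows immediately from the fact that the coaction is an $\mathbb{R}$-morphism.
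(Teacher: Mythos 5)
Your proof is correct and takes essentially the same route as the paper: both reduce to the linear case by constructing a $G$-equivariant closed $\mathbb{R}$-embedding $i:X\hookrightarrow V$ into a finite-dimensional $\mathbb{R}$-rational $G$-representation and then invoking the result of \cite{BHC,RichSlow}. The only difference is cosmetic: the paper cites \cite[I.1.12]{Borel:LinAlgGrps} for the existence of the embedding, while you spell out the standard construction via algebra generators of $\mathbb{R}[X]$ and local finiteness of the coaction.
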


\textit{Remark}. It is well-known that $G\cdot x$ is Hausdorff closed if and only if it is Zariski closed, see \cite{Borel:LinAlgGrps}.  Notice that the above situation arises when we have a real algebraic group acting on a real algebraic variety.  This lemma has been proven for linear $G$ actions, see \cite[Proposition 2.3]{BHC} and \cite{RichSlow}; we reduce to this case.

\begin{proof} Let $G$ and $X$ be as above.  It is well-known that there exists a complex vector space $V$ (defined over $\mathbb R$), a closed $\mathbb R$-imbedding $i:X \hookrightarrow V$, and a representation $T: G\to GL(V)$ defined over $\mathbb R$ such that $i(gx)=T(g)i(x)$ for all $g\in G , x\in X$.  See \cite[I.1.12]{Borel:LinAlgGrps} for the construction of such an imbedding.

As all of our objects are defined over $\mathbb R$ we see that $G(\mathbb R)$ acts on $X(\mathbb R)$, $i(X(\mathbb R))\subset V(\mathbb R)$, and $T: G(\mathbb R) \to GL(V(\mathbb R)$ is a real linear representation of $G(\mathbb R)$, cf. the remark before Definition \ref{def: RC property}.

Now take $x\in X(\mathbb R)$.  We have the following set of equivalences
$$ \begin{array}{rl} G(\mathbb R)x & \mbox{is closed in }X(\mathbb R)\\
    i(G(\mathbb R)x) & \mbox{is closed in $V(\mathbb R)$, as $i$ is a closed $\mathbb R$-imbedding}\\
    T(G(\mathbb R))i(x) &\mbox{is closed in }V(\mathbb R)\\
    T(G)i(x) &\mbox{is closed in }V \mbox{ by \cite{BHC, RichSlow}}\\
    i(Gx)&\mbox{is closed in }V\\
    Gx&\mbox{is closed in }X
\end{array}$$ \end{proof}

\subsection*{Quotients}

Recall that if $F$ is a complex reductive affine algebraic group acting on a complex affine variety $X$, there exists a ``good quotient'' $X//F$ from Geometric Invariant Theory (GIT).  Here $X//F$ is an affine variety together with a quotient morphism $\pi : X\to X//F$ which is a regular map between varieties.  The variety $X//F$ has as its ring of regular functions $\mathbb C[X//F] = \mathbb C[X]^F$, the $F$-invariant polynomials on $X$.  Moreover, the quotient map is the morphism corresponding to the injection $\mathbb C[X]^F \hookrightarrow \mathbb C[X]$.  See \cite{Newstead} for a detailed introduction to Geometric Invariant Theory and quotients.

Good quotients are categorical quotients (see \cite[Chapter 3]{Newstead}).  As a consequence they possess the following universal property which will be needed later.  Let $\phi : X \to Z$ be a morphism which is constant on $F$-orbits.  Then there exists a unique morphism $\varphi : X//F \to Z$ such that $\varphi \circ \pi = \phi$.

Our application of GIT quotients is the following.  Let $G$ be a reductive group and $F$ a reductive subgroup.  The group $F$ acts on $G$ via $f\cdot g = gf^{-1}$.  This gives a left action of $F$ on $G$ such that every orbit is closed.  In this way the GIT quotient $G//F$ is a parameter space; that is, every $F$-orbit is closed.  If one considers the analytic topologies on $G$ and $G//F$ one readily sees that $G//F$ and $G/F$ (with the usual Hausdorff quotient topology) are homeomorphic.  In this way we endow $G/F$ with a Zariski topology.  Here and in later discussion we identify the coset space $G/F$ with the variety $G//F$.  Moreover, it will be shown that the natural $G$-action on $G/F$ is algebraic.

Richardson and Slodowy \cite{RichSlow} have shown the following
\begin{prop}\label{prop: quotient is defined over R} Let $G$ be a reductive algebraic group acting on $X$ so that $G$, $X$, and the action are defined over $\mathbb R$ and consider the quotient morphism $\pi :X\to X//G$.  Then $\pi$ is defined over $\mathbb R$ and $\pi (X(\mathbb R))\subset (X//G)(\mathbb R)$ is Hausdorff closed. \end{prop}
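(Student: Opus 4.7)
The plan is to handle the two conclusions separately. First, to show that $\pi$ is defined over $\mathbb R$, I would exploit the Reynolds operator $R : \mathbb C[X] \to \mathbb C[X]^G$ associated to the reductive group $G$. Since $G$ and the action on $X$ are defined over $\mathbb R$, complex conjugation $\sigma$ on $\mathbb C[X]$ normalizes the $G$-action, so $\sigma R \sigma^{-1}$ is again the unique $G$-equivariant projection onto $\mathbb C[X]^G$, forcing $\sigma R = R \sigma$. Consequently $R$ carries $\mathbb R[X]$ into $A := \mathbb R[X] \cap \mathbb C[X]^G$, and writing a general $G$-invariant as $f = f_1 + i f_2$ with $f_j \in \mathbb R[X]$ and applying $\sigma$-invariance shows $\mathbb C[X]^G = A \otimes_{\mathbb R} \mathbb C$. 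Thus $X//G$ inherits a compatible $\mathbb R$-structure, and the comorphism of $\pi$ --- the inclusion $\mathbb C[X]^G \hookrightarrow \mathbb C[X]$ --- is automatically defined over $\mathbb R$. The set-level containment $\pi(X(\mathbb R)) \subset (X//G)(\mathbb R)$ then follows from the remark preceding Definition \ref{def: RC property}.

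For the Hausdorff closedness of $\pi(X(\mathbb R))$, I would first reduce to the linear case via the $\mathbb R$-equivariant closed embedding $X \hookrightarrow V$ into a real $G$-representation constructed in the proof of Lemma \ref{lemma: BHC abstractly}. Fix a $K$-invariant inner product on $V(\mathbb R)$, with $K$ a maximal compact subgroup of $G(\mathbb R)$, and set $m(y) := \inf\{\|v\|^2 : v \in V(\mathbb R),\ \pi(v) = y\}$ on $\pi(V(\mathbb R))$. Given a sequence $\pi(v_n) \to y \in (V//G)(\mathbb R)$ with $v_n \in V(\mathbb R)$, I would replace each $v_n$ by a real minimal vector $w_n$ of least norm in its real fiber. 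Granted that $m$ extends continuously across $(V//G)(\mathbb R)$, the norms $\|w_n\|^2$ are bounded, so a subsequence converges to some $w \in V(\mathbb R)$, and continuity of $\pi$ yields $\pi(w) = y$, proving closedness.

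The crux is therefore producing real minimal vectors in each real fiber and controlling $m$. I would approach this via a real Kempf--Ness argument: for $v \in V(\mathbb R)$, the function $g \mapsto \|g\cdot v\|^2$ on $G(\mathbb R)/K$ is strictly convex along geodesics of a suitable left-invariant metric, so its infimum, when finite, is attained on a single $K$-orbit of real minimal vectors which lies inside the unique closed $G$-orbit in $\overline{G\cdot v}$. The \emph{main obstacle} is showing this infimum is actually finite when $\pi(v) \in (V//G)(\mathbb R)$ --- equivalently, that the unique closed $G$-orbit sitting over a real point $y$ meets $V(\mathbb R)$ rather than merely being Galois-stable as a set. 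Here I would invoke Weyl's unitarian trick, as the author does elsewhere in the paper: one transfers reality from the fiber to the closed orbit by descending the problem to the action of a compact real form of $G$, for which averaging produces a fixed (hence real) minimizer.
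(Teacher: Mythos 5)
The paper does not prove this proposition at all --- it is stated as a quotation of Richardson--Slodowy \cite{RichSlow} --- so there is no in-paper argument to compare against; the relevant benchmark is the proof in \cite{RichSlow}, which is indeed built on the real Kempf--Ness machinery you invoke.

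Your first paragraph is essentially correct and standard. The Reynolds-operator argument is sound (one only really needs that $\mathbb C[X]^G$ is stable under the conjugation $\sigma$, which already gives $\mathbb C[X]^G = (\mathbb C[X]^G \cap \mathbb R[X]) \otimes_{\mathbb R}\mathbb C$; uniqueness of the Reynolds operator is a clean way to package it), and the containment $\pi(X(\mathbb R)) \subset (X//G)(\mathbb R)$ then follows from $\pi$ being an $\mathbb R$-morphism.

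The second half, however, has a genuine gap exactly where the difficulty lives. You take real minimal vectors $w_n$ in the fibers over $\pi(v_n)$ and then say \emph{``granted that $m$ extends continuously across $(V//G)(\mathbb R)$, the norms $\|w_n\|^2$ are bounded.''} That boundedness --- equivalently, the properness of $\pi$ restricted to the real Kempf--Ness set $\mathfrak M \subset V(\mathbb R)$ --- \emph{is} the theorem. It is not a consequence of convexity of the Kempf--Ness function or of the existence of minimal vectors; Richardson--Slodowy establish it by a quantitative estimate comparing $\|v\|$ on $\mathfrak M$ with the values of a finite set of homogeneous generating invariants (roughly, that $\max_i |p_i(v)|^{1/d_i}$ dominates $\|v\|$ on $\mathfrak M$ up to a constant). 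Without some such estimate the argument does not close.

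Your final paragraph is also off-target. For $v \in V(\mathbb R)$ the infimum $\inf_{g \in G(\mathbb R)}\|g\cdot v\|^2$ is automatically finite (it is an infimum of nonnegative reals), so ``finiteness of the infimum'' is not the obstacle; what needs proof is that the infima along the sequence stay \emph{uniformly bounded} as $\pi(v_n) \to y$. The separate question of whether the closed complex orbit over a real point of $V//G$ meets $V(\mathbb R)$ is genuinely subtle (and in general fails for arbitrary $y \in (V//G)(\mathbb R)$ not in the closure of $\pi(V(\mathbb R))$), but it is not resolved by the unitarian trick as sketched, and it is not the same issue as the boundedness you actually need. In short: right framework, but the key analytic lemma is assumed rather than proved, and the attempted patch at the end addresses a different (and mis-stated) difficulty.
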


In general one cannot expect $\pi (X(\mathbb R))$ to be all the real points $(X//G) (\mathbb R)$.  However, we make the following simple observation.

\begin{lemma}\label{lemma: RC prop of quotient} If $X$ has the (RC) property, then so does $X//G$.  In fact $\pi (X(\mathbb R))$ is Zariski dense in $X//G$.\end{lemma}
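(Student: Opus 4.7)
The plan is to prove the stronger assertion first, namely that $\pi(X(\mathbb{R}))$ is Zariski dense in $X//G$. The (RC) property of $X//G$ will then follow automatically: by Proposition \ref{prop: quotient is defined over R} the map $\pi$ is defined over $\mathbb{R}$, so $\pi(X(\mathbb{R})) \subseteq (X//G)(\mathbb{R})$, and a dense subset of $X//G$ that is already contained in the real points forces those real points to be dense as well.

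The proof of the stronger statement rests on two standard inputs from GIT. First, the quotient morphism $\pi: X \to X//G$ is surjective onto $X//G$; since $G$ is reductive and $X$ is affine, every fiber of $\pi$ contains a (unique) closed $G$-orbit, so no point of $X//G$ is missed. Second, $\pi$ is a morphism of varieties and hence continuous in the Zariski topology. With these in hand, the argument is a one-line topological exercise.

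Concretely, I would suppose $Z \subseteq X//G$ is a Zariski closed subset containing $\pi(X(\mathbb{R}))$ and aim to show $Z = X//G$. Then $\pi^{-1}(Z)$ is Zariski closed in $X$ and contains $X(\mathbb{R})$. The (RC) hypothesis on $X$ gives $\pi^{-1}(Z) = X$, and then surjectivity of $\pi$ forces $Z = \pi(X) = X//G$. Thus the Zariski closure of $\pi(X(\mathbb{R}))$ is all of $X//G$.

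There is really no obstacle to confront here; the lemma is a formal consequence of (i) the Zariski continuity and surjectivity of $\pi$, (ii) Proposition \ref{prop: quotient is defined over R}, and (iii) the (RC) hypothesis on $X$. The only point worth flagging in a formal write-up is the surjectivity of $\pi$, which is standard in the affine reductive setting recalled immediately above the lemma.
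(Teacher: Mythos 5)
Your proof is correct and follows essentially the same route as the paper: the core of both arguments is the observation that a regular (hence Zariski-continuous) surjection sends a Zariski-dense subset to a Zariski-dense subset, applied to $\pi$ and $X(\mathbb{R})$. The one small structural difference is that the paper treats the two claims separately, citing Richardson--Slodowy for the (RC) property of $X//G$ and giving the topological argument only for the density of $\pi(X(\mathbb{R}))$, whereas you prove the density statement first and then deduce the (RC) property from it via Proposition~\ref{prop: quotient is defined over R}; this makes the lemma a bit more self-contained but is not a different proof in substance.
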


The first statement is proven in \cite{RichSlow} and the second statement is a special case of a more general statement: Let $f:X\to Y$ be a regular map and $Z$ a Zariski dense set of $X$, then $f(Z)$ is Zariski dense in $f(X)$.

\begin{prop}\label{prop: H on G/F defined over R} Let $G$ be a reductive algebraic group defined over $\mathbb R$ and $H,F$ algebraic reductive subgroups defined over $\mathbb R$.  Then the action of $H$ on $G/F$ is defined over $\mathbb R$.\end{prop}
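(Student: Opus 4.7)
The plan is to realize the $H$-action on $G/F$ as a morphism induced, via the universal property of a GIT quotient, from a map that is manifestly defined over $\mathbb{R}$.

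First I would identify $G/F$ with the GIT quotient $G//F$ associated to the right $F$-action $f \cdot g = gf^{-1}$, as in the discussion preceding Proposition \ref{prop: quotient is defined over R}. Since $F$, $G$, and this action are all defined over $\mathbb{R}$, that Richardson--Slodowy result gives that the quotient morphism $\pi : G \to G//F$ is an $\mathbb{R}$-morphism, and in particular puts a canonical $\mathbb{R}$-structure on $G/F$.

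Next I would form the composition $\sigma : H \times G \to G/F$, $\sigma(h,g) = \pi(hg)$. The first leg is the group multiplication $H \times G \to G$, which is defined over $\mathbb{R}$ because $G$ is an $\mathbb{R}$-group and $H$ is an $\mathbb{R}$-subgroup; composing with $\pi$ shows $\sigma$ is defined over $\mathbb{R}$. Now let $F$ act on $H \times G$ trivially on the first factor and by $f \cdot (h,g) = (h, gf^{-1})$ on the second; then $\sigma$ is constant on $F$-orbits. Because $F$ is reductive and acts trivially on $H$, the ring-level identity $\mathbb{R}[H \times G]^F = \mathbb{R}[H] \otimes_{\mathbb{R}} \mathbb{R}[G]^F$ (and its complex analogue) gives an isomorphism of $\mathbb{R}$-varieties $(H \times G)//F \cong H \times (G//F)$.

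The universal property of the categorical quotient then yields a unique morphism $\bar\sigma : H \times G/F \to G/F$ with $\bar\sigma \circ \pi_{H \times G} = \sigma$, and by construction $\bar\sigma(h, gF) = hgF$ is precisely the $H$-action. To conclude I would verify $\bar\sigma$ is defined over $\mathbb{R}$: the comorphism $\bar\sigma^{*}$ is determined by $\pi_{H \times G}^{*} \circ \bar\sigma^{*} = \sigma^{*}$, and since $\pi_{H \times G}$ is a defined-over-$\mathbb{R}$ closed inclusion of coordinate rings and $\sigma$ is defined over $\mathbb{R}$, the image of $\mathbb{R}[G/F]$ under $\sigma^{*}$ lies in $\mathbb{R}[H \times G] \cap \mathbb{C}[H \times G]^{F} = \mathbb{R}[H \times G]^{F} = \mathbb{R}[H \times G/F]$, giving what we need.

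The main technical obstacle is keeping the $\mathbb{R}$-structures straight under the identification $(H \times G)//F \cong H \times (G//F)$; once this and Proposition \ref{prop: quotient is defined over R} are in hand, the rest is a brief diagram chase on coordinate rings. Alternatively, if one prefers to avoid the ring-level bookkeeping, one could invoke Proposition \ref{prop: f:M to N defined over R} and simply check that $\bar\sigma$ sends real points to real points, provided one has arranged the source to satisfy the (RC) hypothesis.
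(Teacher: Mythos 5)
Your proof is correct and follows essentially the same route as the paper: the paper extracts the key diagram chase as a separate lemma (Lemma \ref{lemma: H x F on X over R}, proved via the isomorphism $(H\times X)//F \cong H\times(X//F)$ and the universal property of categorical quotients, including the comorphism check over $\mathbb R$) and then applies it with $H=G$, $X=G$. Your version collapses the abstract lemma and specializes directly to $H$ acting on $G$, but the substantive steps — forming $\sigma(h,g)=\pi(hg)$, identifying $(H\times G)//F\cong H\times (G//F)$ as $\mathbb R$-varieties, invoking the universal property, and tracing $\sigma^*(\mathbb R[G/F])$ into $\mathbb R[H\times G]^F=\mathbb R[H\times(G//F)]$ — coincide with the paper's argument.
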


Before presenting the proof of this proposition  we state the following useful lemma.

\begin{lemma}\label{lemma: H x F on X over R} Let $H\times F$ act on a variety $X$, where $H$, $F$, $X$, and the actions are defined over $\mathbb R$.  Then there is a unique
$H$ action on $X//F$ defined over $\mathbb R$ which makes Diagram A (below) commute.\end{lemma}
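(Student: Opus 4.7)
The plan is to build the $H$-action on $X/\!/F$ by descending the evident morphism $H\times X\to X/\!/F$ along the quotient in the second factor, then to verify that everything inherits an $\mathbb R$-structure.

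First, I would form the morphism $\Phi:H\times X\to X/\!/F$ given by $\Phi(h,x)=\pi(h\cdot x)$, which is defined over $\mathbb R$ because all ingredients are. Since the $H$- and $F$-actions on $X$ commute (this is the content of having an $H\times F$ action), for any $f\in F$ we have $\Phi(h,f\cdot x)=\pi(hf\cdot x)=\pi(fh\cdot x)=\pi(h\cdot x)=\Phi(h,x)$, so $\Phi$ is constant on $F$-orbits, where $F$ acts on $H\times X$ trivially on the first factor. The GIT quotient of $H\times X$ by this $F$-action is $H\times(X/\!/F)$: at the level of coordinate rings, $\mathbb C[H\times X]^F=\mathbb C[H]\otimes\mathbb C[X]^F=\mathbb C[H\times(X/\!/F)]$, and the corresponding quotient morphism is $\mathrm{id}_H\times\pi$. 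By the universal property of good quotients, $\Phi$ factors uniquely through a morphism $\bar\Phi:H\times(X/\!/F)\to X/\!/F$, which I declare to be the $H$-action.

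Next I would verify the group action axioms (identity and associativity), which reduce to checking the corresponding identities of morphisms out of $X$; these are automatic because $\pi$ is surjective and the axioms hold upstairs on $X$. Commutativity of Diagram A is built into the construction, namely $\bar\Phi\circ(\mathrm{id}_H\times\pi)=\Phi=\pi\circ(\text{action on }X)$. Uniqueness of the $H$-action making Diagram A commute follows from the surjectivity of $\pi$ (two morphisms $X/\!/F\to X/\!/F$ agreeing after composing with $\pi$ are equal).

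Finally, to see $\bar\Phi$ is defined over $\mathbb R$, I would work with the comorphism. Since $\Phi$ is an $\mathbb R$-morphism, $\Phi^*$ sends $\mathbb R[X/\!/F]$ into $\mathbb R[H\times X]$, and the image lies in the $F$-invariants, so inside $\mathbb R[H\times X]^F=\mathbb R[H\times(X/\!/F)]$. But $\Phi^*$ factors as $(\mathrm{id}_H\times\pi)^*\circ\bar\Phi^*$, and since $(\mathrm{id}_H\times\pi)^*$ is just the inclusion $\mathbb C[H\times(X/\!/F)]\hookrightarrow\mathbb C[H\times X]$, this identifies $\bar\Phi^*(\mathbb R[X/\!/F])\subset\mathbb R[H\times(X/\!/F)]$, as required.

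The step I would expect to be the real point of care is identifying the GIT quotient of $H\times X$ by the second-factor $F$-action with $H\times(X/\!/F)$ as varieties over $\mathbb R$; once that is in hand, the universal property and the coordinate-ring argument for the $\mathbb R$-structure are both clean. Everything else is just bookkeeping with the universal property of $\pi$.
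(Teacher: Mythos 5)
Your proposal follows essentially the same route as the paper: you identify $(H\times X)/\!/F$ with $H\times(X/\!/F)$ at the level of coordinate rings via $\mathbb C[H\times X]^F=\mathbb C[H]\otimes\mathbb C[X]^F$, descend the composite $\pi\circ(\text{action})$ through the universal property of the good quotient to produce the $H$-action $m_2$ on $X/\!/F$, and then run the comorphism argument (using that $(\mathrm{id}_H\times\pi)^*$ is the inclusion, an isomorphism onto the $F$-invariants over both $\mathbb R$ and $\mathbb C$) to conclude $m_2$ is defined over $\mathbb R$. This is exactly the paper's argument; your explicit check of the group-action axioms is a small addition the paper leaves implicit.
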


\begin{proof}[Proof of lemma] Since $H\times F$ acts on X we can consider the $F$ action on $H\times X$.  We claim that the map $\pi_1=id\times \pi_2: H\times X \to H\times (X//F)$ is a good quotient; where $\pi_2: X\to X//F$ is a good quotient.  Here $X//F$ is the variety whose ring of regular functions is $\mathbb C[X]^F$, the $F$-invariant polynomials of $\mathbb C [X]$.   For a detailed introduction to quotients, see \cite[Chapter 3]{Newstead}.

To show that $H\times (X//F)$ is the desired quotient, we will show that $\mathbb C[H\times (X//F)] = \mathbb C[H\times X]^F$ and that the comorphism $(id\times \pi_2)^*$ is the inclusion map.  Recall that $\pi_2^* : \mathbb C[X//F]=\mathbb C[X]^F \hookrightarrow \mathbb C [X]$ is the inclusion map.

There is a natural identification between $\mathbb C[H\times X]$ and $\mathbb C[H]\otimes \mathbb C[X]$ defined by $\sum p_i(h)q_i(x) \mapsto (\sum p_i\otimes q_i) (h,x)$.  Under this identification, $\mathbb C[H\times X]^F\simeq \mathbb C[H] \otimes \mathbb C[X]^F$ where $\mathbb C[H\times X]^F, \mathbb C[X]^F$ denote the $F$-invariant polynomials in $\mathbb C[H\times X], \mathbb C[X]$, respectively.  The map $id\times \pi_2: H\times X \to H\times (X//F)$ corresponds to a comorphism $(id\times \pi_2)^*: \mathbb C[H\times (X//F)] \to \mathbb C[H\times X]$ and under the natural identification described above, the map $(id\times \pi_2)^*$ corresponds to $id^* \otimes \pi_2^* : \mathbb C[H]\otimes \mathbb C[X//F] \to \mathbb C[H]\otimes \mathbb C[X]$.  This map $id^*\otimes \pi_2^*$ is the inclusion map and is an isomorphism onto $\mathbb C[H]\otimes \pi_2^*(\mathbb C[X//F])=\mathbb C[H]\otimes \mathbb C[X]^F$.  Thus, $(id\times \pi_2)^*$ is the inclusion map and  maps $\mathbb C[H\times (X//F)]$ isomorphically onto $\mathbb C[H\times X]^F$.  We have shown the following.
$$ H\times (X//F) \simeq (H\times X)//F$$

Consider the following diagram.  Let $m_1$ denote the morphism corresponding to $H$-action on $X$. Since $\pi_2\circ m_1 $ is constant on $F$-orbits, by the discussion above concerning quotients,  there exists a unique map $m_2$ which factors and makes the diagram commute.
\[ \begin{CD}
H\times X   @>m_1>>    X \\
@V\pi_1VV                               @VV\pi_2V \\
H\times (X//F)          @>m_2>>       X//F
\end{CD} \tag{A} \]
where $\pi_1=id\times \pi_2$ is the quotient of the $F$ action on $H\times X$, $f\cdot(h,x)=(h,f\cdot x)$.    Equivalently, for $h\in H$ and a closed orbit $F\cdot x \subset X$, $h(Fx)=F(hx)$ is a closed $F$-orbit.

We know that $m_1, \pi_1, \pi_2$ are defined over $\mathbb R$ and that $m_2 \circ \pi_1=\pi_2 \circ m_1$ is defined over $\mathbb R$.  From this we wish to show $m_2$ is also defined over $\mathbb R$.  Since $\pi_2^* (\mathbb R[X//F]) = \mathbb R [X]^F$ we have
$$ \pi_1^* \circ m_2^* = m_1^* \circ \pi_2^* : \mathbb R [X//F] \to \mathbb R[H\times X]^F$$

Since $\pi_1^* : \mathbb C[H\times (X//F)] \to \mathbb C[H\times X]^F$ and $\pi_1^* : \mathbb R[H\times (X//F)] \to \mathbb R[H\times X]^F$ are isomorphisms, we have
    $$ m_2^*(\mathbb R[X//F])\subset \pi_1^{*\ -1}(\mathbb R[H\times  X]^F)=\mathbb R[H\times X//F]$$

Thus,  $m_2:H\times (X//F) \to X//F$ is defined over $\mathbb R$, or equivalently, the $H$ action on $X//F$ defined by $m_2$ is defined over $\mathbb R$.  The uniqueness of the $H$-action on $X//F$ is equivalent to the uniqueness of the map $m_2$ in Diagram A.
\end{proof}

\begin{proof}[Proof of the proposition]

Once it is shown that the $G$-action on $G/F$ is defined over $\mathbb R$, it will be clear that the $H$-action is also defined over $\mathbb R$.  We apply Lemma \ref{lemma: H x F on X over R} in the setting that $G$ is a reductive group, $H=G$, $X=G$, and $F$ is a reductive subgroup of $G$.

Since $G$ is an algebraic group defined over $\mathbb R$, the action $G\times F$ on $G$ defined by $(h,f)\cdot g=hgf^{-1}$ is defined over $\mathbb R$, where $h,g\in G$, $f\in F$.  Recall that  $G/F$ is  the GIT quotient $G//F$ under the $F$ action listed above (notice all the orbits are closed, hence the usual topological quotient coincides with the algebraic quotient).  The unique $H$-action described in Lemma \ref{lemma: H x F on X over R} is precisely the standard action of $G$ on $G/F$.  Thus we have shown that the usual action of $G$ on $G/F$ is algebraic and defined over $\mathbb R$.

\end{proof}


\section{Transitioning between the real and complex settings: Proof of Theorem 1}

First we remark on how one obtains  Theorem \ref{thm: H action on G/F} for real algebraic groups once it is known for complex groups.  Let $G,H,F$ be the same as in  Theorem \ref{thm: H action on G/F} but with real groups instead of complex groups. Let $G^\mathbb C$ denote the
Zariski closure of $G$ in $GL(n,\mathbb C)$; this variety is a complex algebraic subgroup of $GL(n,\mathbb C)$.  It follows that $G$ is the set of real points of $G^\mathbb C$,  and we call $\GC$ the \textit{algebraic complexification} of $G$.
Likewise, $H,F$ are the real points of their complexifications $H^\mathbb C,F^\mathbb C$.  Here all of our objects have the (RC)-property.

Consider the $G$-equivariant imbedding $i:G/F \to \GC / F^\mathbb C$, defined by $i: gF \mapsto gF^\mathbb C$, and the quotient $\pi : \GC \to \GC /F^\mathbb C$.  Note that $i$ is injective since $G\cap F^\mathbb C = F$.  We view $G/F$ as a subset of $\GC / F^\mathbb C$ via $i$ and we note that $i(G/F)=\pi (G)$.

As $G/F \simeq \pi(G)$ and $G=G^\mathbb C(\mathbb R)$, we see that $G/F \subset (G^\mathbb C/F^\mathbb C)(\mathbb R)$  and is Zariski dense in $G^\mathbb C/F^\mathbb C$ (see Proposition \ref{prop: quotient is defined over R} and Lemma \ref{lemma: RC prop of quotient}).  Moreover, assuming the theorem is true in the complex setting, there exists a Zariski open set $\mathcal O \subset G^\mathbb C/F^\mathbb C$ such every point in $\mathcal O$ has a closed $H^\mathbb C$ orbit.  $G/F$ being Zariski dense intersects $\mathcal O$ and so, by Lemma \ref{lemma: BHC abstractly}, we see that all points of $G/F \cap \mathcal O$ have closed $H$-orbits in $G/F$.  This proves Theorem \ref{thm: H action on G/F} in the real case.\\

To prove the theorem for complex groups, we take advantage of certain real group actions.  Let $G$ be a complex reductive group and $U$ a maximal compact subgroup.  We can realize $U$ as the fixed points of a Cartan involution $\theta$.  Moreover, there exists a real structure on $G$ so that $U$ is the set of real points of $G$ (see \cite[Remark 3.4]{BHC}). Observe that $G$ has the (RC) property as $G$ is the complexification of its compact real form $U$.  We state this below.

\begin{lemma}[Weyl's Unitarian Trick] Let $G$ be a complex reductive group and $U$ a maximal compact subgroup.  Then $U$ is Zariski dense in $G$. \end{lemma}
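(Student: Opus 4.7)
The plan is to use the complex structure on $G$ together with the identity principle for holomorphic functions. Since $U$ is a compact real form of $G$, the complex Lie algebra splits as $\g = \lu \oplus i\lu$, and this real-linear splitting is precisely what powers the density argument.

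It suffices to show that every regular function $f \in \mathbb{C}[G]$ vanishing on $U$ vanishes on all of $G$. I would first reduce to the connected case: the Zariski closure $\overline U$ is a closed subgroup of $G$, so once it contains the identity component $G^0$, the fact that a maximal compact subgroup of a complex reductive group meets every connected component forces $\overline U = G$.

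So assume $G$ is connected and let $f$ vanish on $U$. Because the exponential map $\exp \colon \g \to G$ is holomorphic and a local biholomorphism at $0$, the function $F(Z) := f(\exp Z)$ is holomorphic on some open neighborhood $\Omega$ of $0$ in $\g$. The inclusion $\exp(\lu) \subset U$ gives $F \equiv 0$ on $\Omega \cap \lu$. Now expand $F$ in a power series at $0$ in a real basis of $\lu$, which is simultaneously a complex basis of $\g$: since $F$ vanishes on a neighborhood of $0$ in the totally real subspace $\lu$, every Taylor coefficient is forced to be zero, so $F \equiv 0$ on some Hausdorff-open neighborhood of $0$ in $\g$. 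Hence $f$ vanishes on a Hausdorff-open neighborhood of the identity in $G$, and irreducibility of the connected algebraic variety $G$ then forces $f \equiv 0$ on $G$. This shows $U$ is Zariski dense.

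The main obstacle I anticipate is the identity-principle step: one must confirm that a holomorphic function of several complex variables vanishing on a totally real slice of maximal real dimension vanishes identically on a Hausdorff neighborhood. The power-series expansion above handles this, but if one prefers to avoid bookkeeping in several variables, an equivalent route is to restrict $F$ to each complex line $\mathbb{C}\cdot X$ with $X \in \lu$, apply the one-variable identity theorem on $\mathbb{R}\cdot X$, and reassemble the conclusion by letting $X$ range over a basis. The handling of disconnectedness is the only other wrinkle and is absorbed by the standard structure theory of complex reductive groups together with the reduction paragraph above.
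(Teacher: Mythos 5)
Your argument is correct, and it takes a genuinely different route from the paper's. The paper stays entirely within the $\mathbb{R}$-structure framework it sets up in Section~2: it cites \cite[Remark~3.4]{BHC} to obtain a real structure on $G$ for which $U$ is the set of real points, then observes that $G$ is the algebraic complexification of its compact real form $U$, so Zariski density of $U$ (the (RC) property) holds by construction of the complexification. You instead prove density directly and analytically: pull a regular function back through $\exp$, use that $\mathfrak{u}$ is a totally real slice of maximal dimension in $\mathfrak{g} = \mathfrak{u} \oplus i\mathfrak{u}$, conclude local vanishing via the power-series identity principle, and finish with irreducibility of the connected component; the reduction to the connected case via $\overline{U}$ being a subgroup meeting every component is sound. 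The paper's route is terse and leans on cited structure theory, which fits the surrounding lemmas that already speak the $\mathbb{R}$-form language; your route is self-contained and makes the analytic content of the unitarian trick explicit. One small caution about the sketched alternative at the end of your proposal: applying the one-variable identity theorem only along the lines $\mathbb{C}\cdot X$ for $X$ ranging over a \emph{basis} of $\mathfrak{u}$ gives vanishing only on the union of those coordinate lines, not on a neighborhood of $0$. The correct one-variable version complexifies one coordinate at a time: vanishing on $\mathbb{R}^n$ gives vanishing on $\mathbb{C}\times\mathbb{R}^{n-1}$, then on $\mathbb{C}^2\times\mathbb{R}^{n-2}$, and so on. Your primary power-series argument is unaffected by this.
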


\begin{lemma} We may assume that $H,F$ from our main theorem are $\theta$-stable.\end{lemma}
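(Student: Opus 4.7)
The plan is to reduce to the $\theta$-stable case by combining two facts: the statement of Theorem~\ref{thm: H action on G/F} is unchanged when $H$ and $F$ are replaced by arbitrary $G$-conjugates, and every complex reductive subgroup of $G$ is $G$-conjugate to a $\theta$-stable subgroup.

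First I would verify conjugation-invariance. If $F$ is replaced by $g_1 F g_1^{-1}$, right multiplication by $g_1^{-1}$ descends to a $G$-equivariant isomorphism $G/F \to G/(g_1 F g_1^{-1})$ sending $xF$ to $(xg_1^{-1})(g_1 F g_1^{-1})$. If $H$ is replaced by $g_2 H g_2^{-1}$, then left multiplication by $g_2$ is a $G$-automorphism of $G/F$ that carries $H$-orbits to $(g_2 H g_2^{-1})$-orbits. Both operations preserve the properties ``the $H$-orbit is closed'' and ``belongs to a nonempty Zariski open set,'' so the theorem for $(H,F)$ is equivalent to the theorem for $(g_2 H g_2^{-1},\, g_1 F g_1^{-1})$ for any $g_1, g_2 \in G$. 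This step is essentially bookkeeping.

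Second, I would invoke the classical conjugacy statement: every complex reductive subgroup $K \subset G$ is $G$-conjugate to a $\theta$-stable one. Briefly, $K$ has its own maximal compact subgroup $U_K$, and because maximal compact subgroups of a complex reductive group are unique up to $G$-conjugacy, there is $g \in G$ with $gU_Kg^{-1} \subset U$. On Lie algebras one has $\mathfrak{k} = \mathfrak{u}_K \oplus i\mathfrak{u}_K$; since $\theta$ is $+1$ on $\mathfrak{u}$ and $-1$ on $i\mathfrak{u}$, the inclusion $g\mathfrak{u}_Kg^{-1} \subset \mathfrak{u}$ forces $g\mathfrak{k}g^{-1}$ to be $\theta$-stable, and hence $gKg^{-1}$ is $\theta$-stable.

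Applying this fact separately to $H$ and $F$ produces elements $g_1, g_2 \in G$ with $g_1 F g_1^{-1}$ and $g_2 H g_2^{-1}$ both $\theta$-stable; by the first step, we are free to replace $H$ and $F$ by these conjugates, so we may assume from the outset that $H$ and $F$ are $\theta$-stable. The main obstacle in carrying out this plan is the conjugacy-to-$\theta$-stable result, which rests on the conjugacy of maximal compact subgroups inside a complex reductive subgroup; once that is in hand, the remainder is a routine verification that the hypotheses of Theorem~\ref{thm: H action on G/F} are conjugation-invariant.
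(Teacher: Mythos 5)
Your proof is correct and takes essentially the same approach as the paper: cite (or sketch) the classical fact that reductive subgroups of $G$ can be conjugated to $\theta$-stable ones, then check that the statement of Theorem~\ref{thm: H action on G/F} is invariant under replacing $H$ and $F$ by $G$-conjugates. The paper verifies the conjugation-invariance via the single map $C(g_2)\colon gF \mapsto (g_2 g g_2^{-1})(g_2 F g_2^{-1})$ combined with left translations, while you use right translation for $F$ and left translation for $H$; this is a cosmetic difference.
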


\begin{proof}  It is well-known that there exist conjugations $g_1 H g_1^{-1}$ and $g_2Fg_2^{-1}$ so that these conjugates are $\theta$-stable, see \cite{BHC} or \cite{Mostow:SelfAdjointGroups}.  So to prove the lemma, we just need to show that the theorem holds for $H,F$ if and only if it holds for conjugates of these groups.

Observe that $G/F$ and $G/(g_2Fg_2^{-1})$ are isomorphic as varieties via conjugation by $g_2 : gF \mapsto (g_2 g g_2^{-1})(g_2 F g_2^{-1})$.  We denote this map by $C(g_2)$.  Also observe that $G$ acts via left translation on $G/F$ by variety isomorphisms.  Thus the left translate of a closed set in $G/F$ is again a closed set  $G/F$.  For $k\in G$ we have  $(g_1Hg_1^{-1})k(g_2Fg_2^{-1})$  is closed in $G/(g_2Fg_2^{-1})$ if and only if $C(g_2)^{-1}\cdot ((g_1Hg_1^{-1})k(g_2Fg_2^{-1}))$ is closed in $G/F$.  But $C(g_2)^{-1} \cdot ((g_1Hg_1^{-1})k(g_2Fg_2^{-1}))  = g_2^{-1}g_1Hg_1^{-1}kg_2 F$ and this is closed if and only if $ Hg_1^{-1}kg_2 F$ is closed in $G/F$.

Thus the  $g_1Hg_1^{-1}$-orbit of $k (g_2Fg_2^{-1})$ is closed in $G/(g_2 F g_2^{-1})$ if and only if the $H$-orbit of $g_1^{-1}kg_2 F$ is closed in $G/F$.

\end{proof}

We continue the proof of Theorem \ref{thm: H action on G/F}.
Now that $H,F$ are $\theta$-stable, and $U=Fix(\theta)$, we know that their maximal compact subgroups $U_H = U\cap H,U_F=U\cap F$ are contained in $U$.  Moreover, since $U=G(\mathbb R)$, the compact subgroups $U_H,U_F$ are the real points of the algebraic groups $H,F$.  Observe that $H,F$ have the (RC) property as their maximal compact subgroups are the real points.  For a proof of the following useful fact in the complex setting see \cite{Newstead}.  For an extension to the real setting see Section 2 of \cite{Jablo:Thesis}.

\begin{prop}\label{prop: Newstead}Let $G$ be a real or complex linear reductive algebraic group acting on an affine variety $X$.  If there exists a closed orbit of maximal dimension, then there is a Zariski open set of such orbits. \end{prop}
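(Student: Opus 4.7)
The plan is to realize the set of closed $G$-orbits of maximal dimension as the preimage, under the GIT quotient map $\pi\colon X\to X//G$, of a Zariski open subset of $X//G$. Let $m$ denote the maximal dimension of a $G$-orbit in $X$.

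First I would set $U_m=\{x\in X:\dim G\cdot x=m\}$, which is $G$-invariant and Zariski open thanks to the upper semi-continuity of the stabilizer dimension for the orbit maps $g\mapsto g\cdot x$ (equivalently, the lower semi-continuity of orbit dimension). The complement $X\setminus U_m$ is therefore closed and $G$-invariant. Next I would invoke the standard fact that the quotient morphism $\pi$ sends closed $G$-invariant subsets of $X$ to closed subsets of $X//G$. This gives that
\[
W_m:=(X//G)\setminus \pi(X\setminus U_m)
\]
is Zariski open, and by construction $\pi^{-1}(W_m)=\{x\in X:\pi^{-1}(\pi(x))\subseteq U_m\}$.

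Then I would verify that $\pi^{-1}(W_m)$ is exactly the set of points whose orbit is closed and has dimension $m$. For the forward inclusion, if $G\cdot x$ is closed of dimension $m$, then inside the fiber $\pi^{-1}(\pi(x))$ the orbit $G\cdot x$ is the unique closed orbit, so any other orbit $G\cdot y$ in that fiber would satisfy $G\cdot x\subseteq \overline{G\cdot y}\setminus G\cdot y$, which would force $\dim G\cdot y>m$, a contradiction; hence the fiber equals $G\cdot x\subseteq U_m$ and $\pi(x)\in W_m$. For the reverse inclusion, if $\pi(x)\in W_m$ then the whole fiber lies in $U_m$, so its unique closed orbit $G\cdot x_0$ has dimension $m$; if $G\cdot x$ were not closed, $G\cdot x_0$ would sit inside the boundary $\overline{G\cdot x}\setminus G\cdot x$, but that boundary has dimension strictly less than $\dim G\cdot x=m$, a contradiction. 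The hypothesis of the proposition produces at least one point of $\pi^{-1}(W_m)$, so this open set is nonempty.

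I expect the reverse inclusion to be the main technical point: deducing closedness of $G\cdot x$ from the containment $\pi^{-1}(\pi(x))\subseteq U_m$ rests on the dimension drop along the boundary of an orbit closure, together with the dimension bookkeeping for the unique closed orbit in each fiber. For the real setting the same scheme applies provided one uses the real-GIT quotient and the corresponding property that its quotient map carries closed invariant subsets to closed subsets; this is exactly the kind of input supplied by Section 2 of \cite{Jablo:Thesis}.
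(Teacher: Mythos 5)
The paper does not actually prove this proposition in the text---it cites \cite{Newstead} for the complex case and Section~2 of \cite{Jablo:Thesis} for the real case---so there is no in-paper argument to compare your proof against. Evaluating your argument on its own merits: the complex-case proof is correct and complete. You use the four standard GIT facts in the right order: (i) $U_m$ is Zariski open because orbit dimension is lower semicontinuous and $m$ is the maximum, (ii) for an affine quotient the map $\pi$ sends closed invariant subsets to closed subsets, so $W_m$ is open, (iii) each fiber of $\pi$ contains a unique closed orbit lying in the closure of every orbit in that fiber, and (iv) $\dim\bigl(\overline{G\cdot y}\setminus G\cdot y\bigr)<\dim G\cdot y$. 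Both inclusions in your identification of $\pi^{-1}(W_m)$ with the set of closed orbits of maximal dimension are argued correctly.

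Your closing remark about the real case is too quick and, as stated, has a gap. The ``real GIT quotient'' in the sense of Richardson--Slodowy---the image of $X$ under the complexified quotient map---is only Hausdorff closed inside $(X^\mathbb{C}//G^\mathbb{C})(\mathbb{R})$; it is not itself a Zariski variety, and the restricted map $\pi|_X$ does not in general carry Zariski-closed $G$-invariant subsets of $X$ to Zariski-closed subsets (images of real varieties under regular maps are only semialgebraic). So ``$W_m$ is Zariski open'' does not transfer verbatim, and Zariski-openness of $\pi^{-1}(W_m)$ in $X$ is exactly what the proposition demands. The fix, consistent with how this paper handles the real/complex dichotomy elsewhere (Lemma~\ref{lemma: BHC abstractly}, Proposition~\ref{prop: quotient is defined over R}), is to carry out your construction entirely in the complexification: set $\tilde U_m=\{y\in X^\mathbb{C}:\dim_\mathbb{C}G^\mathbb{C}\cdot y\ge m\}$, form $\tilde W_m=(X^\mathbb{C}//G^\mathbb{C})\setminus\pi^\mathbb{C}(X^\mathbb{C}\setminus\tilde U_m)$, and take $(\pi^\mathbb{C})^{-1}(\tilde W_m)\cap X$. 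This set is Zariski open in $X$; it contains $x_0$ because $G^\mathbb{C}\cdot x_0$ is closed (Richardson--Slodowy) and every other orbit in its fiber has strictly larger dimension; and for any real $x$ in it, $\dim_\mathbb{C}G^\mathbb{C}\cdot x=\dim_\mathbb{R}G\cdot x=m$ forces $G^\mathbb{C}\cdot x$ to be the unique closed orbit in its fiber, whence $G\cdot x$ is closed of dimension $m$ by Lemma~\ref{lemma: BHC abstractly}.
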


\begin{proof}[Proof of Theorem \ref{thm: H action on G/F}]  We apply the above proposition to the action of $H$ on the affine variety $G/F$.  Note that $G/F$ is affine as $G$ is reductive and $F$ is reductive (see \cite[Theorem 3.5]{BHC}).

As the  $F$-action on $G$ is defined over $\mathbb R$, the quotient $G/F$ is defined over $\mathbb R$.  Since our objects have property (RC) the image of the real points of $G$ is dense in $G/F$ by Lemma \ref{lemma: RC prop of quotient}; that is, $U/U_F \subset G/F$ is dense.  Here, as before, we are identifying $U/U_F$ with the image of $U$ under the quotient $G\to G/F$.

Moreover, Proposition \ref{prop: H on G/F defined over R}  shows that the $H$-action on $G/F$ is defined over $\mathbb R$.  If we let $\mathcal O$ denote the set of maximal dimension $H$-orbits in $G/F$, then $\mathcal O \cap (U/U_F)$ is non-empty.  However, $U_H$ is the set of real points for $H$ and every $U_H$ orbit in $U/U_F$ is closed (since they are all compact).  Therefore, every point in $\mathcal O \cap (U/U_F)$ has a closed $H$-orbit by Lemma \ref{lemma: BHC abstractly} and we have found a closed $H$-orbit of maximal dimension.

Applying Proposition \ref{prop: Newstead} we see that generic $H$-orbits are closed.
\end{proof}


\section{Proofs of Corollaries}

We note that the proofs of all results below, except for Proposition \ref{prop: f:M to N defined over R}, are valid in the real and complex cases simultaneously.

\begin{proof}[Proof Corollary \ref{cor: H normal}]  Theorem \ref{thm: H action on G/F} provides some point $kF \in G/F$ which has a closed $H$-orbit.  Take $g\in G$ and consider the point $gk F \in G/F$.  The $H$-orbit of this point is $(H\cdot gk) F = (g \ g^{-1}Hgk )F = (g\ Hk )F$ which is closed as the $G$ action on $G/F$ is by variety isomorphisms.  Hence every $H$-orbit in $G/F$ is closed as $G$ acts transitively on $G/F$.

We prove the second statement of the corollary using Corollary \ref{cor: V G good implies V H good} (which is proven below).  In the proof of this  corollary it is shown that if $G\cdot v$ is closed in $V$, then there exists $g\in G$ such that $H gv$ is closed in $V$.  But now $Hgv = gHv$ by the normality of $H$.  Moreover, $gHv$ is closed in $V$ if and only if $Hv$ is closed in $V$ as $G$ acts by isomorphisms of the vector space.  This proves the second part of the proposition.
\end{proof}

\begin{proof}[Proof of Corollary \ref{cor: V G good implies V H good}]  We prove the second statement first.  Take $v\in V$ such that $G\cdot v$ is closed. It is well-known that $G_v$ is reductive, see, e.g., \cite[Theorem 4.3]{RichSlow} or \cite[Theorem 3.5]{BHC}.  The orbit $G\cdot v$ is $G$-equivariantly isomorphic to the affine variety $G/G_v$.  Thus the $H$-orbit $H\cdot gv \subset G\cdot v \subset V$ corresponds to $H\cdot g G_v \subset G/G_v$ and for generic $g$ these $H$-orbits are closed by Theorem \ref{thm: H action on G/F}.  This proves the second statement.

For the first statement, let $\mathcal O =\{\ v\in V\ | \dim \ H\cdot v\mbox{ is maximal} \}$ and let $\mathcal U =\{ \ v\in V\ |\ G\cdot v \mbox{ is closed }\}$.  The set $\mathcal O$ is a nonempty Zariski open set and by hypothesis $\mathcal U$ contains a nonempty Zariski open set.  Pick $w\in \mathcal O \cap \mathcal U$.  For generic $g\in G$,  the orbit $H\cdot gw$ is closed by the argument of the previous paragraph. Moreover, $gw \in \mathcal O \cap \mathcal U$ for generic $g\in G$.  Thus there exists some point $gw$ which has a closed $H$-orbit of maximal dimension.  Therefore by Proposition \ref{prop: Newstead} generic $H$-orbits in $V$ are closed.
\end{proof}

\begin{proof}[Proof of Corollary \ref{cor: V,W G good implies V+W is G good}]  Take $v\in V$ and $w\in W$ which both have closed $G$-orbits.  Then the $G\times G$ orbit of $(v,w)$ is closed in $V\oplus W$.  Now consider the diagonal imbedding of $G$ in $G\times G$.  In this way, $G$ acts on $V\oplus W$ and since generic $G\times G$-orbits in $V\oplus W$ are closed, we see that generic $G$-orbits in $V\oplus W$ are also closed by Corollary \ref{cor: V G good implies V H good}. \end{proof}

\begin{proof}[Proof of Corollary \ref{cor: intersection of red.}]  Let $G$ be a reductive group and let $H$, $F$ be reductive subgroups.  There exists a representation $V$ of $G$ such that the  reductive subgroup $F$ can be realized as the stabilizer of a point $v\in V$ and such that the orbit $G\cdot v$ is closed, see \cite[Proposition 2.4]{BHC}.

By Corollary \ref{cor: V G good implies V H good}, we know that $H\cdot gv$ is closed for generic $g\in G$.  Thus $H_{gv}$ is reductive for generic $g\in G$.  But $H_{gv}=H\cap G_{gv}=H\cap gG_vg^{-1}=H\cap gFg^{-1}$ and we have the desired result.
\end{proof}

\begin{proof}[Proof of Proposition \ref{prop: f:M to N defined over R}]
First we remark on the direction that does not require $M$ to have the (RC) property; that is, if $f:M\to N$ is defined over $\mathbb R$ then $f(M(\mathbb R)) \subset N(\mathbb R)$.  To see this direction write $f=(f_1,\dots , f_n)$, where $f_i :\mathbb C^m \to \mathbb C$.  These component functions are precisely $f_i=f^*(\pi_i)$ where $\pi_i$ is projection from $\mathbb C^n$ to the $i$-th coordinate.  Since this projection is defined over $\mathbb R$ we see that the $f_i$ take real values when evaluated at real points.  That is, $f ( M\cap \mathbb R ^m) \subset N\cap \mathbb R^n$.

Now assume $M$ has the (RC) property and let $f:M\to N$ be a morphism of varieties such that $f (M(\mathbb R)) \subset N(\mathbb R)$.  We will show $f^*(\mathbb R [N]) \subset \mathbb R [M]$; that is, $f$ is defined over $\mathbb R$.

We can describe the polynomial $f$ by its coordinate functions, $f=(f_1,\dots,f_n)$ where $f_i:\mathbb C^m \to \mathbb C$ and $f_i|_{M\cap \mathbb R^m} \to \mathbb R$.  Let $\overline f_i$ denote the polynomial whose coefficients   are the complex conjugates of those of $f_i$, then we have $\frac{1}{2}(f_i + \overline f_i )=f_i$ on the set $M\cap \mathbb R^m$.  $M$ having the (RC) property means precisely that $M\cap \mathbb R^m$ is Zariski dense in $M$, thus $\frac{1}{2}(f_i + \overline f_i )=f_i$ on $M$.  If we define $P = \frac{1}{2}(f+\overline f)$ then $P$ has real coefficients and restricted to $M$ equals $f$.

Take $g\in \mathbb R[N]$, then $f^*(g) \in \mathbb C[M]$ and $f^*(g)=g\circ f = g \circ P$ on $M$.  Since $g$ and $P$ have real coefficients, so does their composition.  That is, $f^*(g) \in \mathbb R[M]$.
\end{proof}

\end{document}